\documentclass[12pt,a4paper]{article}%
\usepackage{amsfonts}
\usepackage{amsmath}
\usepackage[a4paper]{geometry}
\usepackage{fancyhdr}
\usepackage{amssymb}
\usepackage{color}
\usepackage[pdftex]{hyperref}
\usepackage{graphicx}
\usepackage{subfigure}%
\usepackage{url}
\usepackage{enumerate}
\providecommand{\U}[1]{\protect\rule{.1in}{.1in}}
\hypersetup{                                                  a4paper,                                                  breaklinks                                            }
\newtheorem{theorem}{Theorem}

\newtheorem{definition}[theorem]{Definition}

\newtheorem{lemma}[theorem]{Lemma}

\newtheorem{proposition}[theorem]{Proposition}
\newtheorem{remark}[theorem]{Remark}

\renewenvironment{abstract}
{\par\noindent\textbf{\abstractname.}\ \ignorespaces}
{\par\medskip}
\begin{document}

\title{\textbf{Pedal Curves of a Bicorn}}
\author{\textit{Thierry Dana-Picard, Moshe Hanau, Shmuel Krichevsky}\\ndp@jct.ac.il, mozs770.hanau@gmail.com, shmuelkrichevskey@gmail.com\\Department of Mathematics \\Jerusalem College of Technology\\Israel}
\date{\today}
\maketitle

\begin{abstract}
\textit{We explore pedal curves (a special case of a geometric locus) of a the plane curve called a bicorn. This curve has two cusps, which induce the existence of singularities of the pedal. We study the singularities of the pedals in each case. For this various presentations are used for the curves, parametric and symbolic presentations. Their properties and influence on the quality of the output is briefly discussed. In particular, the determination of points of self-intersection requires a parametric presentation; in this case a rational parametrization is the most efficient. Work is performed in an environment involving Dynamic Geometry, automated commands and a Computer Algebra System. Finally we describe a connection between our plane curves and some art, making the topic here suitable for STEAM education.}

\end{abstract}


%


%
\thispagestyle{fancy}

\section{Introduction}

Plane algebraic curves are a traditional topic, going back to the ancient Greeks. The vast literature includes  printed catalogs such as \cite{yates} and online databases. For example,  the \href{url{https://mathshistory.st-andrews.ac.uk/Curves}}{MacTutor} website proposes a list of Famous Curves Index, giving often a detailed history of the presented curve. The database \href{mathcurve.org}{Mathcurve} is much larger; for many curves it offers several constructions, and also animations. Another characteristic is the multiple representations for almost every curve: when possible, a cartesian equation, a rational parametrization, a trigonometric parametrization, a complex presentation, etc. Multiple representations of the same mathematical object and switching between them, are important to study this object in an efficient way. As a  whole, we refer to Duval's registers of representation \cite{duval}; in our work on plane curves, the used registers are mostly graphical, symbolic and numerical.  We switch often between these registers: some properties are better studied using a parametrization, among them trigonometric parametrization (if available) provide a plot of better quality than a rational parametrization (if available), and these are in general of better quality than with an implicit plot using a cartesian equation. Moreover singularities (such as cusps) are easier classified using parametrization, but sometimes computations using curvature are more efficient to discover cusps; see \cite{kiss curve}.   
   
We study plane curves in a technology-rich environment. First, we explore with a software for Dynamic Geometry (DGS), here GeoGebra\footnote{Freely downloadable from \url{http://www.geogebra.org}} and its companion GeoGebra-Discovery\footnote{Freely downloadable from \url{https://github.com/kovzol/geogebra-discovery}. As it is always under development, we recommend to check frequently for a new version.}.  Its dynamic features (dragging points, slider bars, etc.,) enable exploration, leading to conjectures, which can then be proven using a Computer Algebra System (CAS). Such a CAS, called Giac \cite{kovacs parisse}, is embedded into GeoGebra. It happens that a stringer software is needed, in which case we use Maple (currently Maple 2025). Networking between different technologies provides important results, the automated commands of GeoGebra-Discovery having a crucial role \cite{safety,thales}. In this paper, we use its different versions of the automated commands to determine geometric loci: 
\begin{itemize}
\item 4 versions of a \textbf{Locus} command, all numerical. The output is a plot in the graphical window, which receives a generic name, as any variable.
\item 3 versions of a \textbf{LocusEquation} command, which is symbolic. The output has 2 fully synchronized components: a plot in the graphical window and an equation in the algebraic window. It happens that this command does not work, in which case a popup appears with a message telling that some steps of the construction are not supported by the command. It is worth to try an alternative way for the construction.
\end{itemize}
\begin{remark}
\label{remark irrelevant components}
Even when both commands provide outputs, they may be different. \textbf{LocusEquation} computes the closure in Zariski topology of the actual geometric locus under study, and irrelevant components may appear, which cannot always be distinguished by algebraic means (such as factorization of polynomials). This distinction may be obtained by a further dynamic exploration with the DGS.  
\end{remark}      
Pedal curves are presented in the  \href{https://mathcurve.com/courbes2d.gb/podaire/podaire.shtml}{dedicated page of Mathcurve} with some examples. 
\begin{definition}
\label{def pedal curve}
Let $\mathcal{C}$ be a plane curve, which will be called the \emph{base curve},  and $P$ a point in the plane, which will be called the \emph{pole} or the  \emph{pedal point}.  The pedal of $\mathcal{C}$  with respect to $D$ is the locus of the feet of the lines passing by $P$ and perpendicular to the tangents to the curve $\mathcal{C}$.
\end{definition}

Pedal curves of conics are studied in \cite{pedal sextics octics ACTM 2025}, leading to the discovery and study of curves of higher degree and giving examples for Remark \ref{remark irrelevant components} (in Section 4 there); Figure \ref{fig pedals of conics} shows GeoGebra's screenshots for pedals of (a) a parabola and (b) an ellipse with respect to an external point.
 \begin{figure}[htb]
\centering
 \includegraphics[width=4.5cm]{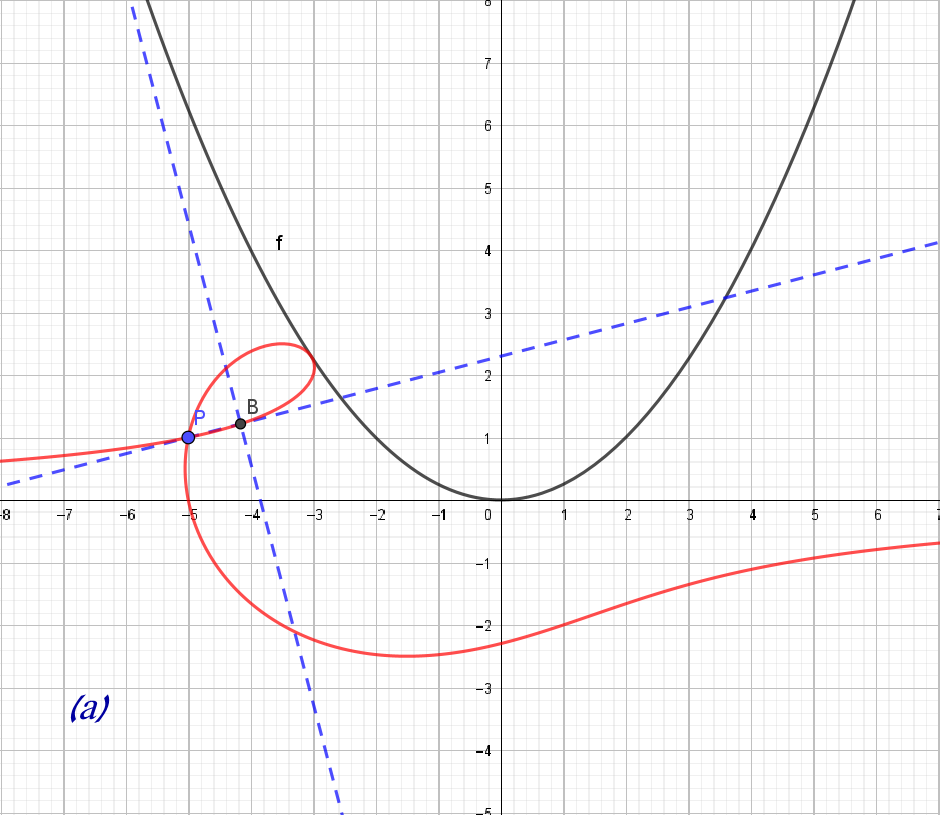}
\qquad
  \includegraphics[width=5.5cm]{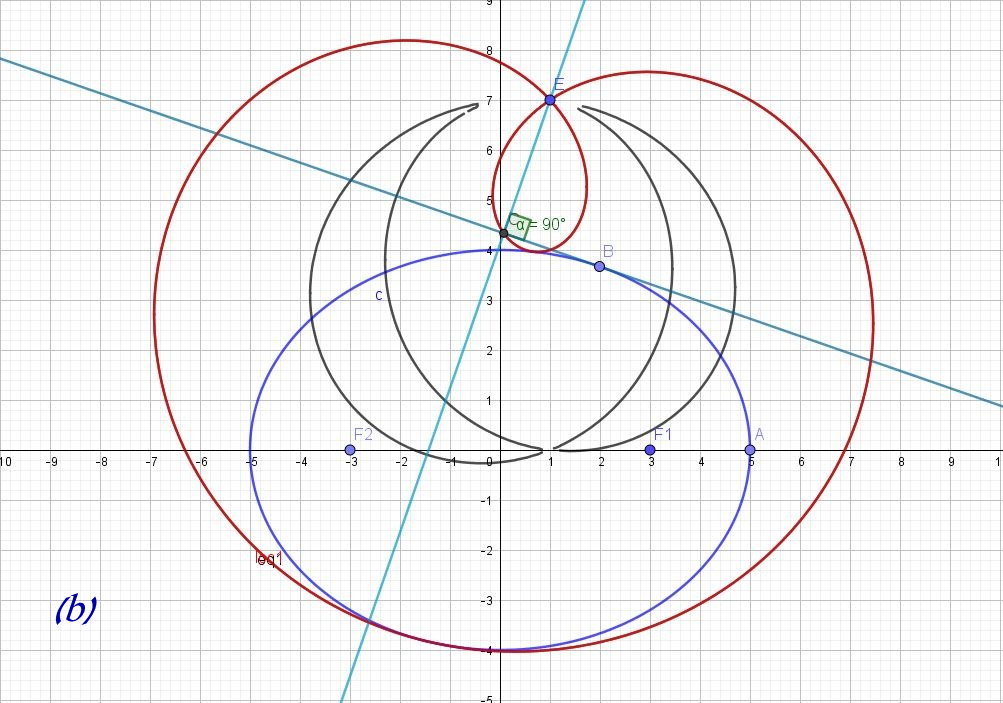}
\caption{Pedals of conics w.r.t. an external point}
\label{fig pedals of conics}
\end{figure}

In the present work, we explore properties of the pedal curves of a curve called the \textbf{bicorn} (or by its nickname, the "cocked hat"). We begin by deriving a parametrization of the pedal curve for some special cases of the location of the pole, and examine interesting properties of the singular points. 

\section{The study of singular points}
The study of singular points on a pedal curve is at the intersection of classical differential geometry, algebraic geometry, and applied kinematics. Take a base curve $\mathcal{C}$ and a pedal point $P$, the resulting pedal curve $\mathcal{C}_P$ often inherits or transforms singularities. It may also invents singularities, like cusps, nodes (self-intersection points, or tacnodes) that reveal hidden geometric properties of $\mathcal{C}_P$. We recall some of the important general properties.

In algebraic geometry, singular points are essential for determining the genus of an algebraic curve via the Pl\" ucker formula. The book \cite{sendra-winkler}) focuses extensively on the algorithmic determination of a curve's genus based on its singular structure. It  explicitly introduces the classical definition where the genus $g$ of an irreducible plane curve of degree $d$ with ordinary singularities $P_1, \dots, P_n$ of respective multiplicities $r_1, \dots, r_n$ is computed via the formula:  
\begin{equation}
\label{eq genus}
g = \frac{1}{2}\left[(d-1)(d-2) - \sum_{i=1}^n r_i(r_i-1)\right].
\end{equation}
See also Fulton's important book \cite{fulton}.

Even when the base curve $\mathcal{C}$ is smooth, the pedal transformation may create nodes or cusps, and increases the degree of the curve.
The singularities of a pedal curve do not appear at random; they correspond to critical intrinsic features of the base curve $C$:
\begin{itemize}
\item Inflexion points: An inflection point on the base curve $\mathcal{C}$ typically generates a cusp on its pedal curve $\mathcal{C}_P$; see \cite{weiss and giblin}.
\item The position of the pedal point: If the  pedal point $P$ lies on the base curve $\mathcal{C}$, the pedal curve $\mathcal{C}_P$ has a singular point at $P$. If $P$ is a focus of the base curve (for instance, of a conic), the pedal curve simplifies strongly (often becoming a circle or a straight line). See \cite{pedal sextics octics ACTM 2025}.
\end{itemize}

Singularities of pedal curves are studied for several applications, in physics, engineering, and optics. Among others, we have te following occurrences:
\begin{enumerate}
 \item Kinematics: movement of linkages, cams, and gears is modeled using plane curves \cite{kovacs linkages}. Understanding where cusps or double points occur allows engineers to design mechanisms that either avoid these disruptive physical boundaries or intentionally use them to achieve a sudden change in mechanical advantage.
\item Trajectory tracking: If a component is constrained to move such that it traces a path defined by a perpendicular projection (the essence of a pedal curve), the singularities represent, among other properties,  stationary points or reversals of motion. 
\item Optics and wavefronts (Caustics), the envelopes of light rays reflected or refracted by a curved surface. The singularities, specifically cusps, of these curves correspond to areas of intense light concentration, such as the bright lines you see at the bottom of a coffee mug on a sunny day. If the edge of your cup is circular, you see a cardioid on your coffee. Studying the singularities allows optical engineers to map and correct aberrations in lenses and parabolic reflectors.
\item Computer-Aided Geometric Design (CAGD): when modeling shapes using computer software, curves are frequently offset or transformed. A common problem in curve offsetting is the appearance of unwanted "loops" and self-intersections \cite{pedal sextics octics ACTM 2025,offsets of trifolium,offsets of cassini,kiss curve}. 
\item Ballistics and envelopes of Trajectories: in tracking families of parabolic or ballistic paths under gravitational and drag forces, the envelope of safety is often calculated (simple examples are described in \cite{safety}). A singularity on the boundary curve represents a point where multiple trajectories overlap or terminate, providing vital data regarding target coverage or impact zones.
\end{enumerate}
Summarizing, the singularities of a pedal curve translate differential properties of the base curve (like curvature variations and inflection points) into distinct, localized algebraic features (nodes and cusps) that can be easily categorized and calculated using computer algebra systems.

In this paper, we take a bicorn as base curve and study pedals for various positions of the pedal point. In each case, we analyze the singularities and derive general results. The different kinds of singular points request different methods. Therefore cusps are studied in Section \ref{section pole at the origin} (when the pole is at the origin) and attention is given to the differences between working with parametrization or with an implicit equation. Section \ref{pole on the y-axis} deals with a pole running on the $y-$axis, and Section \ref{section pole on the x-axis} with a pole running on the $x-$axis; the obtained shapes are different. Section \ref{section self-intersection} is devoted to points of self-intersection. A more general case is shown in Section \ref{general case pole}.
  
\section{The bicorn}
\label{sectino bicorn}
According to the local needs, we will use either of the following definitions.

\begin{definition}
\label{def cartesian bicorn}
The bicorn curve is the plane curve defined by the Cartesian equation:
\begin{equation}
\label{eq bicorn}
    y^2(a^2 - x^2) = (x^2 + 2ay - a^2)^2
\end{equation}
where $a$ is a real parameter.
\end{definition}
Parametrization of a plane curve is never unique. We chose here a trigonometric parametrization, which provides accurate plots (see Remark \ref{remark rational param} below).
\begin{definition}
\label{def trig param bicorn}
The bicorn curve is the plane curve defined by the following trigonometric parametrization:
\begin{equation}
\label{trig param bicorn}
\begin{cases} 
    x(u) = a \sin u\\ 
    y(u) = \frac{a \cos^2 u}{2 - \cos u}
\end {cases}
\end{equation}
where $u \in [0, 2 \pi )$.
\end{definition}

WLOG, as we are interested in the topology of the pedal curves,  for what follows we can take $a = 1$. After all, the parameter $a$ fixes only the scaling, not the topology. The bicorn is displayed in Figure \ref{fig bicorn}.
\begin{figure}[htb]
\begin{center}
\includegraphics[width=5cm]{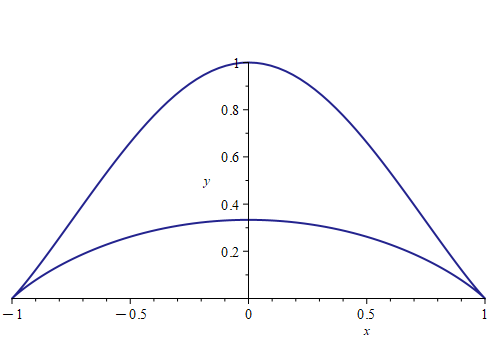}
\caption{The bicorn}
\label{fig bicorn}
\end{center}
\end{figure}

\begin{remark}
\label{remark rational param}
Using Maple's command \textbf{implicitize} (from the package \emph{algcurves}) derives Definition \ref{def cartesian bicorn} from Definition \ref{def trig param bicorn}. Some algebraic manipulation is needed to obtain exactly the same expression. In reverse direction, Maple's command \textbf{parametrization}  applied to  Definition \ref{def cartesian bicorn} will not yield the trigonometric parametrization, but a rational parametrization. With $a=1$, the obtained parametrization is as follows:
\begin{equation}
\label{rat param bicorn}
\begin{cases}
x = \frac{21 t^{2}+40 t+16}{29 t^{2}+40 t+16}\\
y= \frac{8 t^{2} \left(25 t^{2}+40 t+16\right)}{551 t^{4}+1688 t^{3}+2048 t^{2}+1152 t+256}
\end{cases}
\end{equation}

It is easy to show that the denominators have no real roots. Regarding the plot quality, the choice of the interval for the parameter has  big  influence, but a blank zone will always remain. A general treatment of the existence and the computation of rational parameterizations with a CAS approach is to be found in \cite{sendra-winkler}. In reverse direction,  the implicitization issue for trigonometric-hyperbolic parametrization is addressed in \cite{lastra sendra sendra}. 
 \end{remark}

Note that we can proceed as in \cite{safety,dynamic constructions,DP-Recio} (among others), and substitute 
\begin{equation}
\label{trig to rat}
\begin{cases}
\cos u =\frac{1-r^2}{1+r^2} \\
\sin u = \frac {2r}{1+r^2}
\end{cases}
\end{equation}
in Eq. (\ref{trig param bicorn}). This does not ensure that the obtained parametrization is identical to Eq. (\ref{rat param bicorn}).
In what follows, we use mostly the trigonometric parametrization, which ensures more accurate plots than rational parameterizations.

\section{Case 1: Pole at the Origin.}
\label{section pole at the origin}

In our first special case, we fix the pole to be at the origin. We will determine the pedal curve of the bicorn w.r.t. the origin in 2 settings: using the trigonometric parametrization, and using a rational parametrization.

\subsection{Work with the trigonometric parametrization}

To find the equation for the pedal curve, elementary Linear Algebra and Differential Geometry\footnote{The Maple code is provided as supplementary material.} is applied.  Starting with parametrization (\ref{trig param bicorn}), the following formulas for the derivative vector are obtained:
\begin{equation}
\label{derivative vector}
 \overset{\longrightarrow}{C'} : \quad 
 \begin{cases} 
 x'(u) = \cos u  \\ 
 y'(u) = \frac{\sin u \cos u(\cos u - 4)}{(2 - \cos u)^2} 
\end{cases}
\end{equation}
This vector vanishes for $u=\frac{\pi}{2}$ and $u=\frac{3\pi}{2}$,
corresponding to singular points of the bicorn. With iterated differentiations, we can prove that these points are cusps. For other values of the parameter, the general tangent line $T_u$ is given by the following cartesian equation:
\begin{equation}
\label{general tangent to bicorn - param}
 \left(-\cos u \; \sin u +4 \sin u \right) x +\left(\cos^2 u-4 \cos u+4\right) y +\left(2 \cos^2 u+\cos u-4\right)=0
\end{equation}

A direction vector for the normal to $T_u$ has coordinates
\begin{equation}
\label{normal vector}
\vec{N}(u)=
 \begin{cases} 
 x_N(u) = \frac{\sin u \cos u(\cos u - 4)}{(2 - \cos u)^2} \\ 
 y_N(u) = -\cos u  
\end{cases}
\end{equation}
whence an equation for the normal to $T_u$ through the origin is:
\begin{equation}
\label{normal through origin}
 \left(\cos^2 u-4 \cos u+4\right)x
 + \left(\cos u\sin u-4\sin u\right)y =0.
\end{equation}

The pedal curve is the geometric locus of the intersection points between the tangent $T_u$ to the original curve and its normal $N_u$ through the pole. Solving the system of equations yields a trigonometric parametrization of the pedal curve:
\begin{equation}
\label{trig param pedal wrt origin}
b(u)=
\begin{cases}
x_b(u) = \frac{\sin u \left(2 \cos^2 u+\cos u-4\right) \left(\cos u-4\right)}{9 \cos^2 u-40 \cos u+32},
\\
y_b(u) =-\frac{(2 \cos^2 u+\cos u-4) (\cos u-2)^2}{9 \cos^2 u-40 \cos u+32}
\end{cases}
\end{equation}
Figure \ref{fig pedal wrt origin} shows the bicorn and the pedal curve which has been determined.

\begin{figure}[htb]
\begin{center}
\includegraphics[width=5cm]{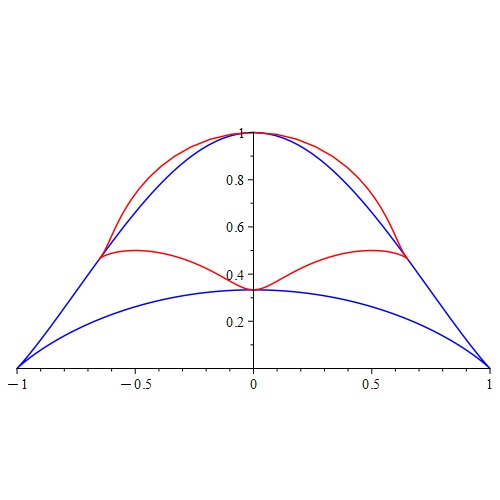}
\caption{The bicorn (in blue) and its pedal curve (in red) w.r.t. the origin}
\label{fig pedal wrt origin}
\end{center}
\end{figure}

\subsection{Work with a rational parametrization}
From Equations \eqref{general tangent to bicorn - param} and \eqref{trig to rat}, we derive a Cartesian equation for the tangent $T_u$ (at the point corresponding to the value $u$ of the parameter),
namely:
\begin{equation}
\label{general tangent to bicorn - cartesian}
2u(5u^2+3)x+(3u^2+1)^2y-(3u^4+12u^2+1)=0.
\end{equation}
The perpendicular $N_u$ to $T_u$ through the origin has equation
\begin{equation}
\label{normal to general tangent through origin}
(3u^2+1)^2x-2u(5u^2+3)y=0.
\end{equation}
The pedal curve of the bicorn w.r.t. the origin is the geometric locus of
points which solve the system of Equations
\eqref{general tangent to bicorn - cartesian}--\eqref{normal to general tangent through origin},
i.e. the curve with parametrization:
\begin{equation}
\label{pedl wrt origin - rational}
\begin{cases}
x = \frac{2u(5u^2+3)(3u^4+12u^2+1)}{(u^2+1)^2(81u^4+46u^2+1)} \\
y = \frac{(3u^2+1)^2(3u^4+12u^2+1)}{(u^2+1)^2(81u^4+46u^2+1)}
\end{cases}.
\end{equation}
Note that the denominator has no real roots. A parametric plot yields also Figure \ref{fig pedal wrt origin}.  

\subsection{Implicitization}
\subsubsection{With Gr\"obner bases algorithms} We start from the parametric parametrization \ref{trig param pedal wrt origin} and use the substitution 
given by Eq. (\ref{trig to rat}). The following Maple code gives the desired answer, where \emph{pedalpoint} denotes the trigonometric parametrization of the pedal curve.
\small
\begin{verbatim}
xpedalrat := simplify(subs(cos(u) = (-r^2 + 1)/(r^2 + 1), 
             subs(sin(u) = 2*r/(r^2 + 1), rhs(pedalpoint[1]))))
P1 := x*denom(xpedalrat) - numer(xpedalrat):
ypedalrat := simplify(subs(cos(u) = (-r^2 + 1)/(r^2 + 1), 
subs(sin(u) = 2*r/(r^2 + 1), rhs(pedalpoint[2]))))
P2 := y*denom(ypedalrat) - numer(ypedalrat):
J:=<P1,P2>:
JE:=EliminationIdeal(J,{x,y});
G:=Generators(JE)[1]:
\end{verbatim}
\normalsize
The output is the following octic polynomial:
\footnotesize
\begin{equation}
\label{cartesian eq pedal}
\begin{split}
G(x,y) &= 3 x^8 + 12 x^6 y^2 + 18 x^4 y^4 + 12 x^2 y^6 + 3 y^8 + 44 x^6 y + 132 x^4 y^3 + 132 x^2 y^5 + 44 y^7 + 18 x^6\\
&  + 165 x^4 y^2 + 276 x^2 y^4 + 129 y^6 - 252 x^4 y - 492 x^2 y^3 - 240 y^5 + 27 x^4 + 207 x^2 y^2 + 64 y^4
\end{split}
\end{equation}
\normalsize

\begin{proposition}
The pedal curve of the bicorn w.r.t. the origin is an octic curve whose implicit equation is $G(x,y)=0$, where $G$ is the polynomial defined in Eq. (\ref{cartesian eq pedal}).
\end{proposition}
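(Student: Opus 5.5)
The plan is to prove the statement in two directions: first that every point of the pedal lies on $G=0$, then that $G$ is irreducible of degree $8$, so that the Zariski closure of the pedal is exactly the curve $G(x,y)=0$.

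For the inclusion, we start from the rational parametrization (\ref{pedl wrt origin - rational}) rather than the trigonometric one, since substituting (\ref{trig to rat}) into (\ref{trig param pedal wrt origin}) only yields an equivalent rational map. Write $x=p(u)/q(u)$ and $y=s(u)/q(u)$ with $q(u)=(u^2+1)^2(81u^4+46u^2+1)$, which has no real roots. Substitute into $G$ and multiply by $q(u)^8$. This gives a polynomial in $u$ of degree at most $64$, and we check that it vanishes identically. This is a finite, routine CAS computation. It can also be organized by hand, because $G$ is visibly arranged by powers of $x^2+y^2$: the top terms are $3(x^2+y^2)^4+44y(x^2+y^2)^3+\dots$. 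Moreover the pedal point satisfies $x^2+y^2=(3u^4+12u^2+1)^2/q(u)$, by the perpendicular-foot formula, since the squared distance from the origin to $T_u$ is the squared constant term divided by the squared norm of the normal vector. With this, the substitution collapses considerably. The parameter values $u=\pm\infty$, i.e.\ $r\to\infty$ in (\ref{trig to rat}), correspond to $\cos u=-1$. That point lies on $G=0$ by continuity. The cusp parameters of the bicorn are handled in the same way: there the formula still defines a limiting tangent and the foot is the limit point.

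For the converse, we use elimination theory. The ideal $J$ in the Maple code, after saturating by the denominators, is the kernel ideal of the rational map. Its elimination ideal $J\cap\mathbb{Q}[x,y]$ is principal and prime, because it is the kernel of a ring map to the domain $\mathbb{Q}(u)$. Its generator is therefore the implicit equation of the Zariski closure of the image. The Gröbner basis computation returned $G$. To confirm that this is the generator and not a multiple of it, we will check the following.
\begin{itemize}
\item $G$ is irreducible over $\mathbb{C}$, for example by the CAS command \textbf{AFactor}, or because a rational curve's implicit equation must be irreducible and $G$ already vanishes on the image.
\item The degree is $8$. Here we compute that the parametrization is proper, i.e.\ generically injective: a generic point has a unique parameter, which can be tested via $\gcd$ of $p(u)-x_0q(u)$ and $s(u)-y_0q(u)$ at a random image point. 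Then $\deg G$ equals the maximum of the numerator and denominator degrees of the reduced parametrization, namely $8$ (cf.\ \cite{sendra-winkler}).
\end{itemize}

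The main obstacle is properness, together with the cancellation of common factors in the parametrization. If the map $u\mapsto(x,y)$ were $2{:}1$, the degree count would give $4$ and $G$ would be a square or reducible. Since $x$ is odd and $y$ is even in $u$, the symmetry $u\mapsto -u$ maps the curve to its reflection, not to itself pointwise, so the map is not forced to be $2{:}1$. Properness still has to be verified by the gcd test, and that is the step I would check most carefully.
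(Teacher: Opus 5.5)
Your proof is correct, but it is organized differently from the paper's. The paper's argument is a single elimination. It substitutes (\ref{trig to rat}) into (\ref{trig param pedal wrt origin}), forms $P_1,P_2$ from numerators and denominators, and reports the generator of $\langle P_1,P_2\rangle\cap\mathbb{Q}[x,y]$ as $G$. It gives no justification that this output is exactly the implicit equation, or that the degree is $8$.

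In fact none is needed beyond trusting the CAS. Write $P_1=q_1x-p_1$ and $P_2=q_2y-p_2$ with $\gcd(p_i,q_i)=1$, which holds here. A B\'ezout relation $a_ip_i+b_iq_i=1$ gives $q_1(a_1x+b_1)\equiv 1$ modulo $J$, and similarly for $q_2$. So the denominators are already units, $J$ is prime without saturation, and its elimination ideal is automatically generated by the irreducible implicit polynomial.

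You instead build an independent certificate. The identity $q^8\,G(p/q,s/q)\equiv 0$ shows that the implicit polynomial divides $G$. The degree count then forces equality:
\begin{itemize}
\item the parametrization \eqref{pedl wrt origin - rational} is in reduced form, since the common factor $3u^4+12u^2+1$ of $p$ and $s$ has roots $u^2=-2\pm\sqrt{33}/3$, which are not roots of $q$;
\item it has degree $8$;
\item it is proper.
\end{itemize}
Hence the curve has degree $8=\deg G$. This protects against an unsaturated or mis-transcribed elimination, and it actually explains the word ``octic''. That matters, since the envelope computation in Section~\ref{subsection envelope} produced extraneous circle factors.

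Three small remarks.
\begin{itemize}
\item Your alternative reason for irreducibility (``a rational curve's implicit equation must be irreducible and $G$ already vanishes on the image'') is circular. Vanishing only gives divisibility, so it is the degree count, or an explicit absolute factorization, that finishes the argument.
\item Properness, which you plan to test by a gcd, also follows conceptually. The bicorn parametrization is proper, since $\sin u=x$ and $\cos u=(x^2+2y-1)/y$. The map from a point to its tangent line is birational onto the dual curve. The foot $F\neq O$ recovers the tangent line as $\{X:\ X\cdot F=F\cdot F\}$.
\item Your Zariski-closure reading of the statement is the right one. The lowest-order form $27x^4+207x^2y^2+64y^4$ of $G$ is positive definite, so the origin is an isolated real point of $G=0$. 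It is not on the real pedal: it is the image of the four non-real roots of $3u^4+12u^2+1$. Read literally as an equality of real point sets, the proposition would be off by exactly this point.
\end{itemize}
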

\begin{remark}
The higher the degree of a 2-variable polynomial, the larger the set of shapes of  the corresponding curves.There exist complete catalogs of curves of degree 2,3,4, but only partial catalogs for higher degree. Here we have a new example of an octic, whose construction is totally different from the octics shown in \cite{thales,DP-Recio}.
\end{remark}

\section{The pole runs on the $y$-axis: the singular locus}
\label{pole on the y-axis}

We use parametrization \eqref{trig param bicorn} for the bicorn, and take the pole on the $y-$axis, namely $P(0,s)$. An equation of the tangent at a general point is 
\begin{equation}
\label{eq general tangent}
\left(-\cos u\sin u+4\sin u\right)x
+\left(\cos^2u-4\cos u+4\right)y
+\left(2\cos^2u+\cos u-4\right)=0.
\end{equation}
An equation of the normal to this tangent through the pole is
\begin{equation}
\label{eq general normal}
\left(\cos^2u-4\cos u+4\right)x
-\left(4-\cos u\right)\sin u \,y
+\left(4-\cos u\right)\sin u \,s=0.
\end{equation}

The pedal curve is determined by the common solutions of Equations (\ref{eq general tangent}) and (\ref{eq general normal}), i.e. by the following parametrization:
\begin{equation}
\label{param pedal pole on y-axis}
\begin{cases}
x = \frac{\left(\left(s+2\right) \cos^2 u+\left(1-4 s\right) \cos u+4 s-4\right) \sin u \left(-4+\cos u \right)}{9 \cos^2 u-40 \cos u+32} \\
y=-\frac{\left(-\cos^2 u+8 \cos u-16\right) \sin^2 u s+2 \cos^4 u-7 \cos^3 u+20 \cos u-16}{9 \cos^2 u-40 \cos u+32}
\end{cases}
\end{equation}

Figure \ref{fig:pedal-bicorn-y-axis} shows the bicorn and two of its pedal curves for poles on the $y$-axis.
\begin{figure}[htb]
\centering
\includegraphics[width=3.5cm]{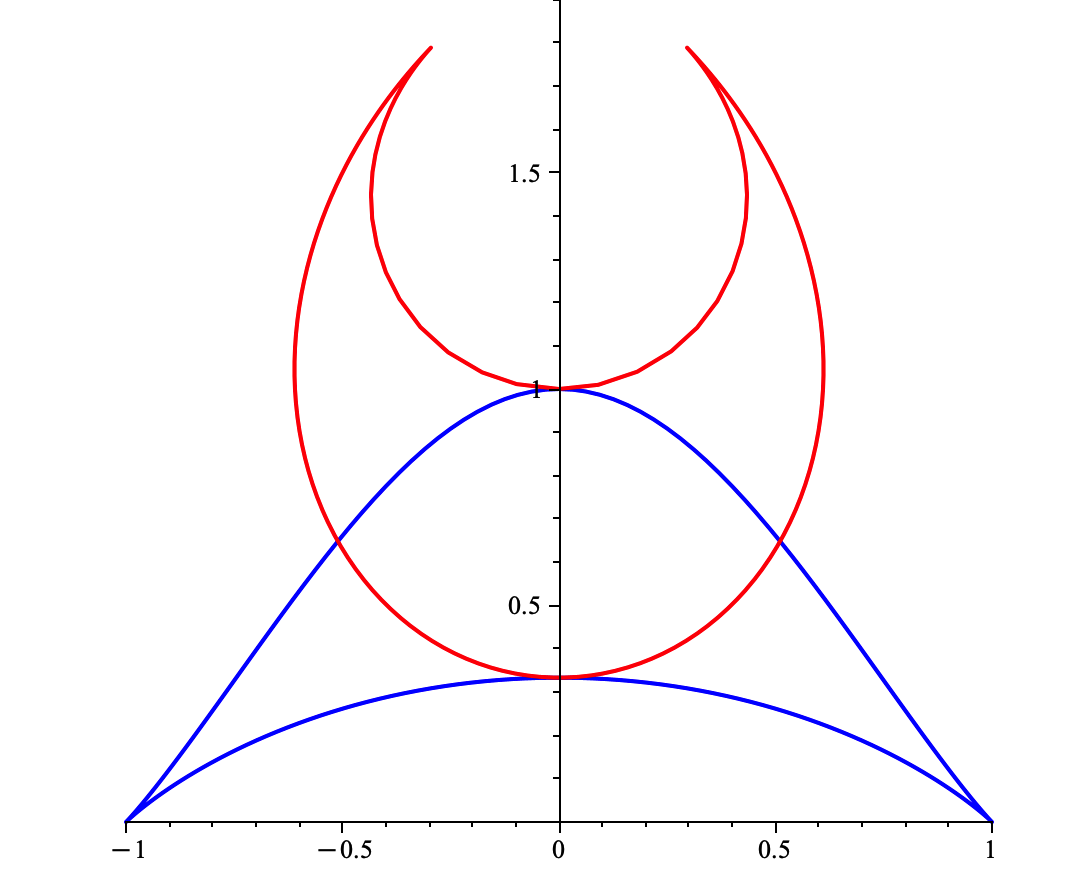}
\qquad
\includegraphics[width=4.5cm]{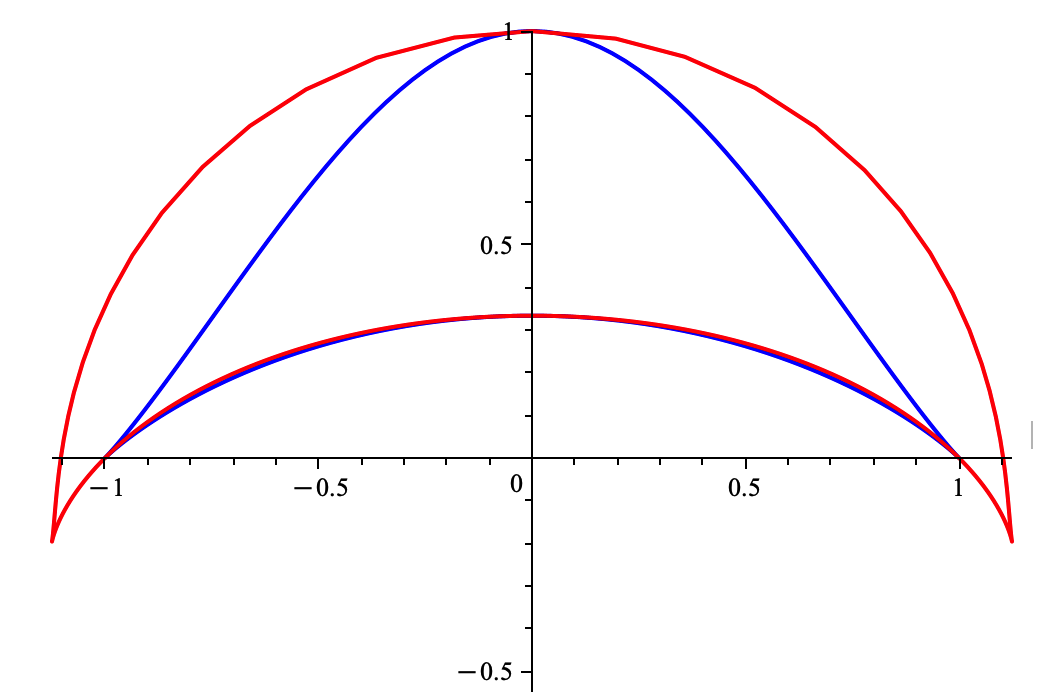}
\caption{The bicorn and its pedal curves for the poles $P=(0,2)$ and $P=(0,-1)$}
\label{fig:pedal-bicorn-y-axis}
\end{figure}

To determine the singular points of the pedal curve, it is more convenient to use a rational parametrization. With the substitution (\ref{trig to rat}), we obtain:
\begin{equation}
\label{general rat param pedal}
\begin{cases}
x(t) = -\frac{2 (5 t^{2}+3) t ( 9 s \,t^{4}-3 t^{4}+6 S \,t^{2}-12 t^{2}+s-1)}{(81 t^{4}+46 t^{2}+1) \; (t^{2}+1)^{2}},
\\
y (t)=\frac{27 t^{8}+100 s \,t^{6}+126 t^{6}+120 s \,t^{4}+84 t^{4}+36 s \,t^{2}+18 t^{2}+1}{(81 t^{4}+46 t^{2}+1) (t^{2}+1)^{2}}
\end{cases}
\end{equation}
Differentiating once yields
\begin{equation}
\label{1st derivative general rat param pedal}
\begin{cases}
x'(t)=\frac{\left(7290 s-2430\right) t^{12}+\left(1692 S-24864\right) t^{10}+\left(-6330 s-21390\right) t^{8}+\left(-3000 s-1968\right) t^{6}+\left(198 s+1542\right) t^{4}+\left(156 s-48\right) t^{2}-6 S+6}{\left(81 t^{4}+46 t^{2}+1\right)^{2} \left(t^{2}+1\right)^{3}}\\
y'(t)=-\frac{16200 \left(t^{2}+\frac{1}{3}\right) t \left(\left(s+\frac{17}{30}\right) t^{6}+\left(\frac{19 s}{15}-\frac{11}{90}\right) t^{4}+\left(\frac{23 s}{45}-\frac{7}{18}\right) t^{2}+\frac{s}{15}-\frac{1}{18}\right) \left(t^{2}-\frac{1}{5}\right)}{\left(81 t^{4}+46 t^{2}+1\right)^{2} \left(t^{2}+1\right)^{3}}
\end{cases}
\end{equation}
Maple's \textbf{solve} command provides only two real solutions, namely $t=\pm \sqrt{5}/5$. By substitution of $\sqrt{5}/5$ into (\ref{general rat param pedal}), we obtain a parametrization of one of the geometric loci of singular points
\begin{equation}
\label{param geom locus sing 1}
\begin{cases}
x = -\frac{125 \sqrt{5}\, \left(\frac{64 s}{25}-\frac{88}{25}\right)}{1512}\\
y = \frac{88}{189}+\frac{125 s}{189}
\end{cases}
\end{equation}
This can be easily implicitized:
\begin{equation}
\label{eq geom locus sing 1}
5x \sqrt{5}+8y-11=0.
\end{equation}
By the same method, substitution of  $-\sqrt{5}/5$ into (\ref{general rat param pedal}) leads to the following equation:
\begin{equation}
\label{eq geom locus sing 2}
5x \sqrt{5}-8y+11=0.
\end{equation}

\begin{proposition}
The singular locus of the pedal curve of the bicorn with pole on the $y$-axis is a subset of the union of two lines.
\end{proposition}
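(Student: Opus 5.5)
Throughout, the pedal is described by the rational parametrization \eqref{general rat param pedal}. Its denominators $(81t^4+46t^2+1)(t^2+1)^2$ never vanish for real $t$, so the curve is regular wherever $(x'(t),y'(t))\neq(0,0)$. The plan is to characterize, for each fixed $s$, the real parameter values at which both components of \eqref{1st derivative general rat param pedal} vanish, and then to locate the corresponding points. Here a singular point means a point where the parametrization has zero velocity, i.e. a cusp-type point. Nodes, which appear at distinct parameter values, are treated separately in Section \ref{section self-intersection}. The point at infinity $t\to\infty$, which corresponds to $u=\pi$, should be checked separately in the same way. Equivalently, one can reparametrize by $t=1/\tau$ and test $\tau=0$.

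The first step is to factor the numerator of $y'(t)$. It is
\[
t\left(t^2+\tfrac13\right)\left(t^2-\tfrac15\right)Q_s(t),
\]
where $Q_s$ is the sextic factor in $t^2$ displayed in \eqref{1st derivative general rat param pedal}. The factor $t^2+\tfrac13$ has no real roots, so a real singular parameter must satisfy $t=0$, $t^2=\tfrac15$, or $Q_s(t)=0$. Each case is then tested against the numerator $N_s(t)$ of $x'(t)$.

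\begin{itemize}
\item At $t=0$ we have $N_s(0)=6-6s$. This vanishes only for $s=1$. At $s=1$ the pedal point is $(0,1)$, which is the bicorn point $u=0$, and formula \eqref{general rat param pedal} gives $(x(0),y(0))=(0,1)$. This point satisfies \eqref{eq geom locus sing 1}, since $8\cdot1-11\neq0$ would fail it; so this case has to be checked carefully. If the point does not lie on either line, I would need to argue that the singularity there is spurious, or treat $s=1$ as a degenerate exception.
\item At $t^2=\tfrac15$ I will substitute into $N_s$. I expect $N_s$ to vanish identically in $s$, since this is what Maple's solution $t=\pm\sqrt5/5$ indicates. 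Substituting $t=\pm1/\sqrt5$ into $(x(t),y(t))$ then gives, after eliminating $s$, exactly the lines \eqref{eq geom locus sing 1} and \eqref{eq geom locus sing 2}, as computed just before the statement.
\item For the factor $Q_s$, I will compute the resultant $\operatorname{Res}_t(Q_s,N_s)$, a polynomial in $s$, and show it has no real root $s$ for which a common root $t$ is real.
\end{itemize}

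The main obstacle is this last step. Both $Q_s$ and $N_s$ are even in $t$, so I will first substitute $w=t^2$, which reduces them to a cubic and a sextic in $w$. Their common roots then require $\operatorname{Res}_w=0$. For each real root $s_0$ of this resultant, I would compute $\gcd(Q_{s_0},N_{s_0})$ and check whether it has a root $w\ge 0$. If some real $s_0$ does give a common root with $w\ge0$, the claim could still hold, but only if that point happens to lie on one of the lines; otherwise the proposition needs an exceptional set of values of $s$. This is exactly why the statement says \emph{subset}. The computation would be done in Maple, with Sturm sequences used to certify that no admissible real roots exist.
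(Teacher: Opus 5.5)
Your route is the same as the paper's: common real zeros of $x'$ and $y'$ for the rational parametrization \eqref{general rat param pedal}. The paper takes Maple's \textbf{solve} output $t=\pm\sqrt5/5$, which treats $s$ as generic; you split along the factors of $y'$ instead. That split is the right refinement, but your first bullet is where the argument breaks, and it cannot be repaired. Your sentence about \eqref{eq geom locus sing 1} contradicts itself: $8\cdot 1-11=-3\neq 0$ and $-8+11\neq 0$, so $(0,1)$ lies on neither line.

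Nor is the stationary point spurious. At $s=1$ the inner factor of $x(t)$ becomes $6t^2(t^2-1)$, and $x(t)=36t^3+O(t^5)$, $y(t)=1+6t^2+O(t^4)$. This is an ordinary cusp at the pole.

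The end point you postponed behaves the same way. For $s=1/3$, with $\tau=1/t$, one gets $x=\tfrac{100}{81}\tau^3+O(\tau^5)$ and $y=\tfrac13+\tfrac{10}{9}\tau^2+O(\tau^4)$. That is a cusp at $(0,1/3)$, again on neither line.

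These are exactly the two poles on the $y$-axis that lie on the bicorn ($u=0$ and $u=\pi$). Section 2 of the paper itself says a pole on the base curve gives a singular point at the pole; compare the cardioid's cusp. So the proposition, as stated for every pole $(0,s)$, is false for $s\in\{1,1/3\}$, and the paper's proof misses this because \textbf{solve} treats $s$ as generic. What your plan can establish is the statement for $s\notin\{1,1/3\}$, or inclusion in the two lines together with the points $(0,1)$ and $(0,1/3)$. Note also that ``subset'' only allows the lines not to be filled; it does not allow exceptional points off them.

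The resultant and Sturm-sequence step for $Q_s$ is unnecessary. Let $\theta$ be the tangent angle of the bicorn $\gamma$, and $T,N$ its unit tangent and normal. These are smooth even through the cusps, because the slope $m(t)=-2t(5t^2+3)/(3t^2+1)^2$ is. Put $p=\langle\gamma-P,N\rangle$ and $q=\langle\gamma-P,T\rangle$. Then $b=P+pN$ and $N'=-\theta' T$, and $p'=-\theta' q$ because $\gamma'\parallel T$. Hence
\[
b'=-\theta'\,(p\,T+q\,N).
\]
So $b'=0$ exactly in two cases. The first is $\theta'=0$. Here $m'(t)=6(5t^2-1)(t^2+1)/(3t^2+1)^3$, which vanishes only at $t^2=1/5$, i.e.\ $\cos u=2/3$, for every pole. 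This is why your second bullet holds identically in $s$, and why the two lines are the tangents at the inflections. The second case is $\gamma(u)=P$. This disposes of $Q_s$: a common real root off $t^2=1/5$ would force $\gamma(t)=P$, hence $t=0$ and $s=1$, where $Q_1(0)=1/90\neq 0$. It also covers $t=\infty$, and it gives the correct version for an arbitrary pole $(r,s)$: the exceptional poles are exactly the points of the bicorn.

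Finally, keep your convention that ``singular'' means ``stationary'' explicit in the statement. In the algebraic sense the pole is a multiple point of the pedal curve, and it lies off both lines. For example, at $s=0$ the polynomial $G$ in \eqref{cartesian eq pedal} has no terms of degree below $4$, so the origin is a quadruple point.
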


\begin{remark}
The same results can be obtained if, from the beginning, a rational parametrization is used for the bicorn.
\end{remark}

\begin{remark}
At this step, we have shown that the singular locus is contained in the union of the two lines. Proving that the entire union is obtained would require the
reverse construction: starting from a point on one of the lines and finding a pole on the $y$-axis for which this point appears as a singular point of the
corresponding pedal curve.
\end{remark}

\section{The pole runs on the $x$-axis}
\label{section pole on the x-axis}

The case where the pole lies on the $x-$axis is treated in the same way as the case where the pole lies on the $y-$axis. Take the pole
 $P=(v,0)$, use the same tangent line to the bicorn, and construct the line through $P$ orthogonal to this tangent.
Solving the corresponding system gives a parametric presentation of the pedal curve for this choice of the pole. The computations are similar to the previous section. Maybe surprising,  the same singular parameter values are obtained:
$ u=\arccos\left(\frac{2}{3}\right)$ and $u=2\pi-\arccos\left(\frac{2}{3}\right)$.
Consequently, the corresponding singular points lie on  the same two fixed lines, given by Equations (\ref{eq geom locus sing 1}) and (\ref{eq geom locus sing 2}).

\begin{remark}
The case $P=(V,0)$ is therefore not essentially different from the previous case. It provides another special instance of the general computation carried out in
the next section, where the pole is allowed to be an arbitrary point $P=(V,S)$.
\end{remark}

\section{General Case: Pole at $(r,s)$}
\label{general case pole}
We move to the general case where the pole is located at $(r,s)$. At a general point on the bicorn, the tangent has equation (\ref{eq general tangent}).
The equation for the orthogonal line to the tangent passing through the pole is:
\begin{equation}
\label{eq normal general pole}
\left(\cos^2u-4\cos u+4\right)(x-r)
-\left(4-\cos u\right)\sin u\,(y-s)=0.
\end{equation}

Solving the system \eqref{eq general tangent}--\eqref{eq normal general pole}
yields a general parametrization $b_{r,s}$, as follows:
\footnotesize
\begin{equation}
\label{general pedal general case}
b_{r,s}(u)=
\begin{cases}
x_{r,s}(u)= \dfrac{\sin u(\cos u-4)(2\cos^2u+\cos u-4)+r(\cos u-2)^4+s\sin u(\cos u-4)(\cos u-2)^2}
{9\cos^2u-40\cos u+32}\\
y_{r,s}(u)=\dfrac{s(\cos u-4)^2\sin^2u +r\sin u(\cos u-4)(\cos u-2)^2 -(\cos u-2)^2(2\cos^2u+\cos u-4)}
{9\cos^2u-40\cos u+32}
\end{cases}
\end{equation}
\normalsize
After differentiating, we find the tangent vector $\vec{b}'_{r,s}$.
Singular points appear when the two coordinates of this vector vanish simultaneously. Symbolic computation shows that the common vanishing is obtained here too when the following holds:
\begin{equation}
\label{singular values general pole}
    u = \arccos\left(\frac{2}{3}\right), \quad u = 2\pi - \arccos\left(\frac{2}{3}\right).
\end{equation}
Substituting these values of $u$ into $b_{r,s}(u)$ shows that the corresponding singular points lie on the two fixed lines already obtained in Equations \eqref{eq geom locus sing 1} and \eqref{eq geom locus sing 2}.

\begin{proposition}
For an arbitrary pole $P=(r,s)$, the singular locus of the pedal curve of the bicorn is contained in the union of the two fixed lines
$
5x\sqrt{5}+8y-11=0,
\qquad
5x\sqrt{5}-8y+11=0.
$
In particular, these lines are independent of the position of the pole.
\end{proposition}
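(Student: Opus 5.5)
The plan is to avoid differentiating the explicit parametrization $b_{r,s}$ of Eq. (\ref{general pedal general case}) directly, and instead use the geometric structure of a pedal curve. Write the tangent line $T_u$ of Eq. (\ref{eq general tangent}) in normal form $n(\theta)\cdot X=h$, where $n(\theta)=(\cos\theta,\sin\theta)$ is the unit normal and $\theta=\theta(u)$ is its angle. The foot of the perpendicular from $P=(r,s)$ to $T_u$ is then
\[
F=P+\bigl(h-n\cdot P\bigr)\,n .
\]
Differentiating with respect to $u$ and using $\frac{dn}{du}=\theta'(u)\,n_\theta$, where $n_\theta=(-\sin\theta,\cos\theta)$ is orthogonal to $n$, gives
\[
F'(u)=\Bigl(h'(u)-\theta'(u)\,n_\theta\cdot P\Bigr)\,n+\bigl(h-n\cdot P\bigr)\,\theta'(u)\,n_\theta .
\]
Since $n$ and $n_\theta$ are orthonormal, $F'(u)=0$ exactly when both coefficients vanish.

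First I would show that, for a pole not on the bicorn, these conditions force the tangent line to be stationary. For a regular point $C(u)$ we have $h=n\cdot C$, so $h'=\theta'\,n_\theta\cdot C$ because $n\cdot C'=0$. The first coefficient is therefore $\theta'(u)\,n_\theta\cdot(C(u)-P)$ and the second is $\theta'(u)\,n\cdot(C(u)-P)$. Both vanish if and only if either $\theta'(u)=0$ or $P=C(u)$. Hence for a pole off the bicorn, the singular parameters are exactly those where the tangent direction is stationary, that is, the inflection points of the bicorn. For a pole on the curve, the single extra value with $C(u)=P$ gives the singular point $F=P$, matching the remark in Section 2. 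That case can be set aside or treated separately.

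The concrete computation is then to find where $x'y''-x''y'$ vanishes for the parametrization (\ref{trig param bicorn}). I expect this to reduce to a factor $3\cos u-2$, giving the values (\ref{singular values general pole}) with $\cos u=2/3$ and $\sin u=\pm\sqrt5/3$. Substituting into Eq. (\ref{eq general tangent}) gives $\tfrac{10\sqrt5}{9}x+\tfrac{16}{9}y-\tfrac{22}{9}=0$, i.e. $5\sqrt5\,x+8y-11=0$, and symmetrically $5\sqrt5\,x-8y+11=0$. The key observation is that $F(u_0)$ always lies on $T_{u_0}$. So every singular point, being the foot of the perpendicular from $P$ to an inflectional tangent, lies on one of these two lines, whatever $P$ is. This explains why the lines are independent of the pole. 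It also recovers Eqs. (\ref{eq geom locus sing 1})--(\ref{eq geom locus sing 2}) as the two inflectional tangents of the bicorn.

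The main obstacle is the parameter values where $C'(u)=0$, namely the cusps at $u=\pi/2$ and $u=3\pi/2$. There the formula $h'=\theta' n_\theta\cdot C$ must be justified by continuity, and $\theta'$ may behave badly. I would handle these points by working with the polynomial normal vector $(\,(4-\cos u)\sin u,\ (2-\cos u)^2)$ of Eq. (\ref{eq general tangent}), which does not vanish at the cusps. The aim is to check that the angle $\theta(u)$ has nonzero derivative there, so that the line $T_u$ turns with nonzero speed. Then the argument above applies verbatim and the cusps of the bicorn produce no singularity of the pedal, consistent with the symbolic computation reported before the statement. Finally I would verify that $\theta'$ has no other zeros and that the denominator $9\cos^2u-40\cos u+32$ does not vanish.
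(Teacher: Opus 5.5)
Your argument is correct, and it takes a genuinely different route from the paper. The paper differentiates the explicit parametrization $b_{r,s}$ of Eq.~\eqref{general pedal general case} with a CAS. It reports, without displaying $\vec{b}'_{r,s}$, that the common zeros are the two values with $\cos u=\tfrac23$, and then substitutes these into $b_{r,s}$. You instead prove the identity
\[
F'(u)=\theta'(u)\Bigl[\bigl(n_\theta\cdot(C(u)-P)\bigr)\,n+\bigl(n\cdot(C(u)-P)\bigr)\,n_\theta\Bigr],
\qquad |F'(u)|=|\theta'(u)|\,|C(u)-P|,
\]
which reduces everything to the inflection points of the bicorn. Since $F(u)\in T_u$, the two lines are simply the inflectional tangents. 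That is the real reason they do not depend on the pole, and it is why the paper's Conclusion can read them off the tangent equation.

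The step you flag as the main obstacle is in fact immediate. With $N=\bigl((4-\cos u)\sin u,\,(2-\cos u)^2\bigr)$ one has $N\cdot C'\equiv 0$ identically, so $h'=\theta'\,n_\theta\cdot C$ also holds at the cusps, and
\[
\theta'(u)=\frac{N_1N_2'-N_1'N_2}{|N|^2}=\frac{3(2-\cos u)(2-3\cos u)}{9\cos^2u-40\cos u+32}.
\]
The denominator is exactly $|N|^2=\sin^2u\,(4-\cos u)^2+(2-\cos u)^4\geq 1$. So $\theta'$ vanishes only where $\cos u=\tfrac23$, and it equals $\tfrac38$ at $u=\pi/2$ and $u=3\pi/2$. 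By contrast, $x'y''-x''y'=3\cos^2u\,(2-3\cos u)/(2-\cos u)^3$ also vanishes at the cusps, so working with $N$ rather than $C'$ is the right choice.

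The paper's method is mechanical and transfers to any base curve, but it is opaque: its output is not displayed. Yours can be checked by hand and proves more. For $P$ off the bicorn, the stationary points of the pedal are exactly the feet of the perpendiculars from $P$ to $l_1$ and $l_2$. This also settles the paper's closing exercise that every point of $l_1\cup l_2$ occurs.

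Do not leave the case $P\in\mathcal{C}$ as something to be ``set aside or treated separately''. Your identity shows that the statement as written (``arbitrary pole'') is false there. In that case $P=C(u_0)$ is an extra stationary point of the pedal (a cusp, as for the cardioid), and in general it lies on neither line. For instance, for $P=(0,s)$ the paper's own Eq.~\eqref{1st derivative general rat param pedal} gives $x'(0)=6(1-s)$ and $y'(0)=0$. So for $s=1$, i.e. $P=(0,1)=C(0)$, the pedal is singular at $(0,1)$, whereas both lines cross the $y$-axis at $y=\tfrac{11}{8}$.

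The paper's symbolic computation tacitly assumes $(r,s)$ is generic. What you actually prove is the correct version: containment in $l_1\cup l_2$ when $P\notin\mathcal{C}$, and in $l_1\cup l_2\cup\{P\}$ otherwise. State that hypothesis explicitly. Like the paper, you read ``singular'' as ``stationary point of the parametrization''. In the algebraic sense, the nodes at $P$ allowed by Section~\ref{section self-intersection} would also fall off the lines; for example, $P=(2,0)$ lies on two distinct real tangents of the bicorn.
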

\begin{remark}
The explicit expression of $\vec{b}'_{r,s}$ is rather long. Therefore, only
the resulting common parameter values are displayed here; the full symbolic
computation is included in the supplementary material.
\end{remark}

\underline{Geometric interpretation:} The position of the pole changes the shape of the pedal curve, but not the two fixed lines
on which its singular points may lie. Figure \ref{fig singular-lines} illustrates this phenomenon with $P=(1,2)$ and  $P=(-1,3/2)$ respectively.

\begin{figure}[h]
\centering
\includegraphics[width=5cm]{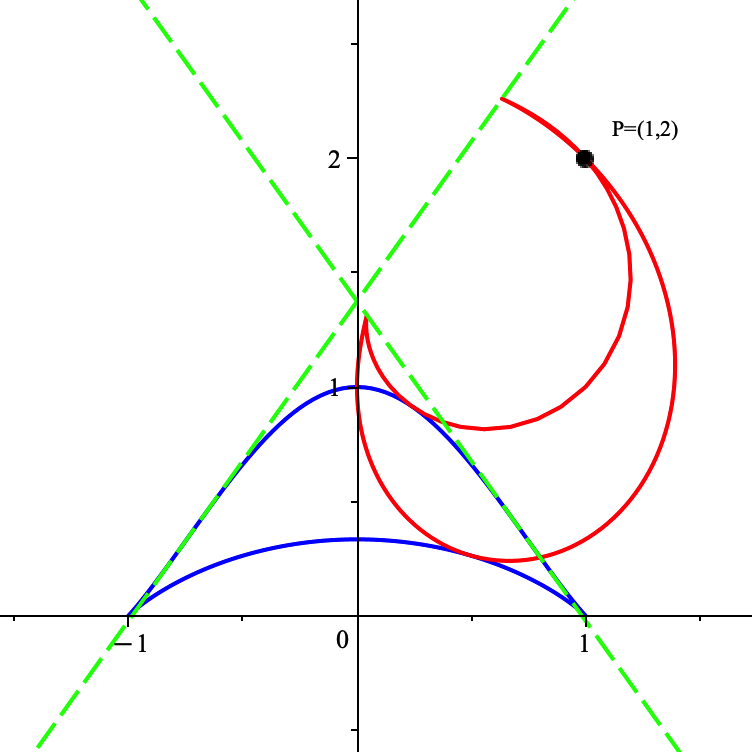}
\qquad \qquad
\includegraphics[width=5cm]{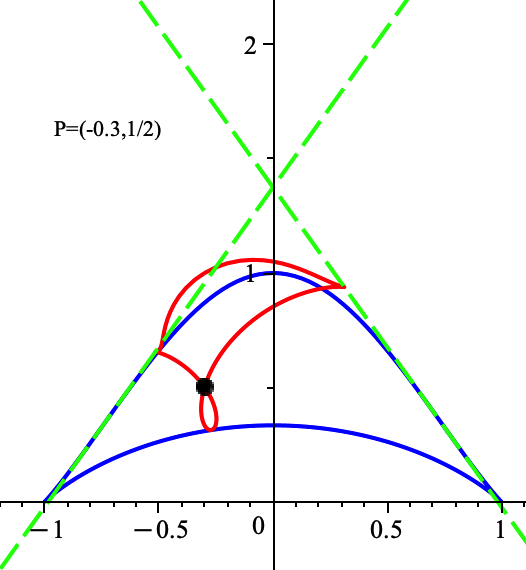}
\caption{The singular points of pedal curves of the bicorn lie on  two fixed tangent lines.}
\label{fig singular-lines}
\end{figure}

\begin{remark} 
A classroom exercise can be  to prove that every point on one of these lines is actually a singular point of a pedal curve of the bicorn. We leave this task to the reader.
\end{remark}

\section{Self-intersections of the pedal curve}
\label{section self-intersection}

After studying the singular locus of the pedal curve, it is natural to ask whether the pedal curve may also have self-intersections. Recall that a point of self-intersection corresponds to  two different values of the parameter, even if the tangent vector does not vanish at this point. In kinematics, the mobile point passes there at different times, eventually in different directions. 
By a non-trivial self-intersection we mean a point $Q$ for which there exist two distinct values of the parameter $m,n\in[0,2\pi)$ such that $ b_{r,s}(m)=b_{r,s}(n)=Q.$

Let $L_u$ denote the tangent line to the bicorn at the point corresponding to the parameter $u$. From Equation \eqref{eq general tangent}, an equation of this line can be
written in the slope-intercept form
\begin{equation}
\label{eq tangent line slope form}
L_u:\quad y= -\frac{(4-\cos u)\sin u}{(2-\cos u)^2}x + \frac{3-\cos u-\cos 2u}{(2-\cos u)^2}.
\end{equation}

We first prove a lemma which will be used in the study of self-intersections.

\begin{lemma}
\label{lemma equal tangent lines imply equal parameters}
Let $m,n\in[0,2\pi)$. If $L_m=L_n$, then $m=n$.
\end{lemma}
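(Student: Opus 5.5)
The plan is to compare the two lines coefficient by coefficient, using the slope-intercept form \eqref{eq tangent line slope form}. Since $2-\cos u\geq 1$, that form is valid for every $u\in[0,2\pi)$, including the cusp parameters $u=\pi/2,3\pi/2$. There the tangent is understood as the limit line given by \eqref{eq general tangent}, which is still a genuine line because the coefficient of $y$ never vanishes. Hence $L_m=L_n$ holds if and only if the slopes and the intercepts coincide. Write $c=\cos u$. Since $\cos 2u=2c^2-1$, the intercept is $(4-c-2c^2)/(2-c)^2$.

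\textbf{Step 1: change of variable.} Put $w=1/(2-c)$, so $c=2-1/w$ and $w\in[1/3,1]$. This map is a bijection between $c\in[-1,1]$ and $w\in[1/3,1]$. A short computation turns the intercept into the quadratic
\[
f(w)=-6w^2+9w-2 .
\]
The square of the slope, $(4-c)^2(1-c^2)/(2-c)^4$, becomes the polynomial
\[
g(w)=(2w+1)^2(-3w^2+4w-1).
\]

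\textbf{Step 2: case $\cos m=\cos n$.} Equal slopes give $(4-c)\sin m=(4-c)\sin n$. Since $4-c\neq 0$, this means $\sin m=\sin n$. Together with $\cos m=\cos n$ and $m,n\in[0,2\pi)$, this forces $m=n$.

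\textbf{Step 3: case $\cos m\neq\cos n$, i.e.\ $w_1\neq w_2$.} Equal intercepts mean $f(w_1)=f(w_2)$. Because $f$ is a quadratic with axis $w=3/4$, this gives $w_1+w_2=3/2$. Equal slopes imply in particular $g(w_1)=g(w_2)$. Substituting $w_2=3/2-w_1$ and dividing by the factor $w_1-3/4$ (which is nonzero, since $w_1\neq w_2$) leaves a polynomial equation in $w_1$ alone. The constraints are $w_1\in[1/3,1]$ and also $w_2=3/2-w_1\in[1/3,1]$, i.e.\ $w_1\in[1/2,1]$.

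\textbf{Main obstacle.} The crux is to check that this residual polynomial has no admissible roots. I would handle it by exploiting the symmetry about $3/4$: put $w_1=3/4+h$ with $h\in(0,1/4]$, so that $g(3/4+h)-g(3/4-h)$ is odd in $h$. After dividing by $h$, one gets a polynomial in $h^2$, and I would analyse its sign on $(0,1/16]$. If some admissible root does survive, I would then go back to the actual signed slopes $-(4-c)\sin u/(2-c)^2$, rather than their squares, and use the sign of $\sin u$ to rule out the corresponding parameter pairs. Once this case is excluded, $L_m=L_n$ forces $\cos m=\cos n$, and Step 2 gives $m=n$.
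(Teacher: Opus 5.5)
Your route is essentially the paper's: you split the intercept equality into $\cos m=\cos n$ (where equal slopes force $\sin m=\sin n$) and the branch $3\cos m\cos n-4\cos m-4\cos n+4=0$, which is your $w_1+w_2=\tfrac{3}{2}$, and you rule out that branch with the squared slopes; the paper's vanishing factor $(3\cos m-2)^3$ is, up to nonzero factors, your $h^3$, because $h=w-\tfrac{3}{4}=\frac{3\cos m-2}{4(2-\cos m)}$. The computation you left open closes at once: $g(\tfrac{3}{4}+h)=\tfrac{125}{64}-\tfrac{45}{2}h^2-32h^3-12h^4$ gives $g(\tfrac{3}{4}+h)-g(\tfrac{3}{4}-h)=-64h^3$, so the residual after dividing by $h$ is $-64h^2\neq 0$ for $h\neq 0$, and you need neither the sign analysis on $(0,\tfrac{1}{16}]$ nor the fallback to signed slopes.
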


\noindent\textit{Proof.}
Assume that $L_m=L_n$. Comparing the two lines given by Equation \eqref{eq tangent line slope form}, we obtain equality of both their
slopes and their $y-$intercepts. The equality of the $y-$intercepts reads
\begin{equation}
\label{eq intercept equality self}
\frac{3-\cos m-\cos 2m}{(2-\cos m)^2}
=
\frac{3-\cos n-\cos 2n}{(2-\cos n)^2}.
\end{equation}
Using the classical identity 
$\cos 2t=2\cos^2 t-1$, 
Equation \eqref{eq intercept equality self} simplifies to
\begin{equation*}
(\cos n-\cos m) \; (3\cos n\cos m-4\cos n-4\cos m+4\ )=0.
\end{equation*}
Thus, we have two possible cases.

First, suppose that $\cos n=\cos m$. Since the slopes are also equal, we have:
\begin{equation*}
-\frac{(4-\cos m)\sin m}{(2-\cos m)^2}=-\frac{(4-\cos n)\sin n}{(2-\cos n)^2}.
\end{equation*}
Together with $\cos n=\cos m$, this implies that $\sin n=\sin m$. Hence, $m=n$ modulo $2\pi$. Since $m,n\in[0,2\pi)$, we obtain $m=n$.

The second possibility is $ 3\cos n\cos m-4\cos n-4\cos m+4=0$. Solving this equation for $\cos n$, we obtain
\begin{equation}
\label{cos n}
\cos n=\frac{4(\cos m-1)}{3\cos m-4}.
\end{equation}
Substitute this expression into the equality of the squared slopes. After simplification, we obtain:
\begin{equation*}
\frac{16(3\cos m-2)^3(\cos m-2)^5}{(3\cos m-4)^4}=0.
\end{equation*}
Therefore, $\cos m=\frac{2}{3}$.
Substituting this value back into Equation \eqref{cos n} yields
\begin{equation*}
\cos n=\frac{2}{3},
\end{equation*}
whence $\cos m=\cos n$. Returning again to the equality of the slopes, we obtain 
\begin{equation*}
-\frac{15\sin m}{8}=-\frac{15\sin n}{8},
\end{equation*} 
and therefore, $\sin m=\sin n$. 
Again, $m=n$ modulo $2\pi$. Since $m,n\in[0,2\pi)$, we conclude that $m=n$.
$\Box$

\begin{proposition}
If the pedal curve $b_{r,s}$ has a non-trivial self-intersection, then the self-intersection point is the pole $P=(r,s)$.
\end{proposition}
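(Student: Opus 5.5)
The plan is to exploit the geometric meaning of the pedal point rather than work with the explicit formulas \eqref{general pedal general case}. For each parameter $u$, the point $b_{r,s}(u)$ is the foot of the perpendicular from $P=(r,s)$ to the tangent line $L_u$, that is, the orthogonal projection of $P$ onto $L_u$. Suppose $m\neq n$ and $b_{r,s}(m)=b_{r,s}(n)=Q$. We will show that $Q\neq P$ forces $L_m=L_n$. Lemma \ref{lemma equal tangent lines imply equal parameters} then gives $m=n$, which is a contradiction.

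The key steps are as follows. First, assume $Q\neq P$. Since $Q$ is the foot of the perpendicular from $P$ to $L_m$, the line $L_m$ passes through $Q$ and is orthogonal to the nonzero vector $\overrightarrow{PQ}$. The same holds for $L_n$. A line in the plane is uniquely determined by one of its points together with a normal vector. Hence $L_m$ and $L_n$ are both the line through $Q$ perpendicular to $PQ$, and so $L_m=L_n$. Lemma \ref{lemma equal tangent lines imply equal parameters} now yields $m=n$, contradicting the non-triviality of the self-intersection. Therefore $Q=P$. Concretely, this corresponds to the situation where both tangent lines $L_m$ and $L_n$ pass through the pole: the projection of $P$ onto any line through $P$ is $P$ itself.

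The main obstacle is making sure that the lines $L_u$ are well defined for every parameter value involved. At the cusps $u=\pi/2$ and $u=3\pi/2$ the derivative vector \eqref{derivative vector} vanishes. However, the tangent equation \eqref{eq general tangent} still defines a genuine line there: its coefficients do not all vanish, since the coefficient of $y$ is $(\cos u-2)^2\neq 0$. So $b_{r,s}$ is defined on all of $[0,2\pi)$ as the projection onto this limiting tangent, and Lemma \ref{lemma equal tangent lines imply equal parameters} covers these values as well, because its proof uses only the slope-intercept form \eqref{eq tangent line slope form}. The denominator $(2-\cos u)^2$ in that form never vanishes, so the form is valid for every $u$. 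I would briefly check this point, and also that the denominator $9\cos^2u-40\cos u+32$ of \eqref{general pedal general case} has no zero with $|\cos u|\le 1$; its roots $(20\pm 4\sqrt 7)/9$ both exceed $1$. With these checks the argument is uniform, and no symbolic computation with \eqref{general pedal general case} is needed.
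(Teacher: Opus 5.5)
Your proof is correct and follows the paper's argument: assuming $Q\neq P$, both $L_m$ and $L_n$ pass through $Q$ and are orthogonal to $PQ$, so they coincide, and Lemma~\ref{lemma equal tangent lines imply equal parameters} then forces $m=n$. Your additional checks are also correct and make explicit a point the paper leaves implicit: the tangent equation defines a genuine line at the cusps because the $y$-coefficient $(\cos u-2)^2$ never vanishes, and $9\cos^2u-40\cos u+32$ has no roots in $[-1,1]$.
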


\noindent\textit{Proof.}
Assume, by contradiction, that the pedal curve has a non-trivial self-intersection at a point $Q\neq P$. Then there exist two different
parameters $m,n\in[0,2\pi)$, with $m\neq n$, such that
$b_{r,s}(m)=b_{r,s}(n)=Q.$

By the definition of the pedal curve, the point $Q=b_{r,s}(m)$ lies on the tangent line $L_m$, and the line $PQ$ is orthogonal to $L_m$. Similarly,
since $Q=b_{r,s}(n)$, the same point $Q$ lies on $L_n$, and the same line $PQ$ is orthogonal to $L_n$.

Since $Q\neq P$, the line $PQ$ is well-defined. Hence both $L_m$ and $L_n$ are orthogonal to the same line $PQ$. Therefore they are parallel.
Moreover, both lines pass through the common point $Q$. Two parallel lines with a common point coincide, and therefore
$L_m=L_n.$
By Lemma \ref{lemma equal tangent lines imply equal parameters}, it follows
that $m=n$, which contradicts the assumption that $m\neq n$.

Therefore, our assumption $Q\neq P$ is impossible. Hence every non-trivial self-intersection point of the pedal curve must be the pole $P=(r,s)$.

\section{Broadening horizons}
\subsection{Pedal curve as an envelope of circles}
\label{subsection envelope}
\begin{proposition}[\cite{lawrence}, p.48]
\label{prop pedal-envelope}
We use the notations of Definition \ref{def pedal curve}. The pedal curve of the curve $\mathcal{C}$ w.r.t. to the point $P$  is the envelope of the circles with diameter $PA$, when $A$ describes the curve $\mathcal{C}$.
\end{proposition}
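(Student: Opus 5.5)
We prove Proposition \ref{prop pedal-envelope} with the classical envelope computation for a smooth one-parameter family of circles. Parametrize $\mathcal{C}$ locally by a regular map $u\mapsto A(u)$ with $A'(u)\neq 0$; the cusps of the base curve are excluded and treated as limits at the end. Let $M(u)=\frac{1}{2}(P+A(u))$ be the midpoint. The circle with diameter $PA(u)$ is the set of points $X$ with $(X-P)\cdot(X-A(u))=0$, by Thales' theorem. So the family is
\[
F(X,u)=(X-P)\cdot(X-A(u))=0 .
\]
The envelope consists of the points $X$ for which some $u$ satisfies both $F(X,u)=0$ and $\partial_u F(X,u)=0$.

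Differentiating gives $\partial_u F=-(X-P)\cdot A'(u)$. The second condition therefore says that $X-P$ is orthogonal to the tangent vector $A'(u)$, so $X$ lies on the line $N_u$ through $P$ perpendicular to the tangent $T_u$ at $A(u)$.

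Now consider the first condition, $(X-P)\cdot(X-A(u))=0$. Write $X-A(u)=(X-P)+(P-A(u))$ and take the component along $A'(u)$. Since $X-P\perp A'(u)$, we have $(X-A(u))\cdot A'(u)=(P-A(u))\cdot A'(u)$. A cleaner route is to decompose along the orthonormal frame $\{\tau,\nu\}$ at $A(u)$, with $\tau$ tangent and $\nu$ normal.

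Because $X-P$ is parallel to $\nu$, write $X=P+\lambda\nu$. Then
\[
(X-P)\cdot(X-A)=\lambda\bigl(\lambda+(P-A)\cdot\nu\bigr).
\]
This vanishes exactly when $\lambda=0$ (giving $X=P$) or $\lambda=-(P-A)\cdot\nu$. In the second case $X-A=(P-A)-((P-A)\cdot\nu)\nu$ is parallel to $\tau$, so $X\in T_u$. Hence the nontrivial solution is exactly $T_u\cap N_u$, the foot of the perpendicular from $P$ to the tangent, which is the pedal point of Definition \ref{def pedal curve}.

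The remaining step is to justify that the solution $X=P$ is spurious, or more precisely that it is just the degenerate point where all circles meet. Every circle of the family passes through $P$, so $P$ always satisfies both equations, and it is a base point of the family rather than a genuine envelope point. Equivalently, the circles are tangent to the locus of feet at the foot point, since the foot point moves along the envelope. We also check that the two conditions are consistent with the standard envelope theorem, i.e.\ $\partial_u^2F\neq 0$ generically, so the characteristic point is well defined.

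The main obstacle I expect is this bookkeeping of degenerate cases:
\begin{itemize}
\item the base point $P$;
\item the points where $P\in T_u$, where the foot coincides with $P$;
\item the cusps of the bicorn at $u=\pi/2,3\pi/2$, where $A'(u)=0$.
\end{itemize}
These are handled by continuity: we pass to the limiting tangent direction, which exists at the cusps as already seen in the parametrization (\ref{derivative vector}).
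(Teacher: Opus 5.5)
The paper gives no proof of this proposition. It quotes it from Lawrence (p.~48) and illustrates it only for the pole at the origin. There the \textbf{Envelope} command returns the degree-12 polynomial of Eq.~\eqref{polyn eq pedal}, which factors as $G\,P_1P_2$; the paper calls the two circle factors ``irrelevant'' and attributes them to Zariski closure. Your characteristic-point computation is the standard proof and is correct at regular parameters. The identity $(X-P)\cdot(X-A)=\lambda\bigl(\lambda+(P-A)\cdot\nu\bigr)$ is right, and the other root gives the foot $T_u\cap N_u$. One also sees directly that the foot $H$ lies on the circle, since $\angle PHA$ is a right angle. Compared with the paper's purely computational route, yours can explain where the extra components come from, but only once the cusp step is made precise rather than left to ``continuity''.

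With the definition you actually wrote (all $X$ for which some $u$ gives $F=\partial_uF=0$), the cusps are not a bookkeeping detail. At $u=\pi/2,3\pi/2$ you have $A'(u)=0$, so $\partial_uF\equiv0$ and the \emph{whole} circle with diameter $PA(u)$ lies in the discriminant. For $a=1$ these cusps are $A=(\pm1,0)$, and for $P=O$ the circles are $x^2+y^2\mp x=0$. These are exactly the factors $P_2,P_1$ in the paper's output, so under your stated definition the proposition is false for the bicorn.

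It becomes true once you write $A'(u)=\cos u\,(1,m(u))$ with $m(u)=-\sin u\,(4-\cos u)/(2-\cos u)^2$, and replace $\partial_uF=0$ by $(X-P)\cdot(1,m(u))=0$. This is the normal-line equation the paper uses, and it is what your ``limiting tangent direction'' amounts to. Equivalently, define the envelope as the closure of the characteristic points over regular parameters. Either way, state the convention explicitly.

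Two smaller corrections:
\begin{enumerate}
\item Discard the branch $\lambda=0$, not the point $P$ itself. $P$ genuinely lies on the pedal whenever it lies on some tangent $T_u$, which is the situation behind the paper's self-intersection result.
\item The condition $\partial_u^2F\neq0$ is not needed for the set equality. At the foot $\partial_u^2F=-\lambda\,\nu\cdot A''(u)$, which vanishes at inflection points of $\mathcal{C}$. For the bicorn these are at $\cos u=2/3$, precisely the pole-independent singular parameters found in Sections~\ref{pole on the y-axis}--\ref{general case pole}.
\end{enumerate}
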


We start a GeoGebra-Discovery session with the parametric equations (\ref{trig param bicorn}) for the  bicorn\footnote{It is available at \url{https://www.geogebra.org/m/kzfmue4g}}; a screenshot is  displayed in Figure \ref{fig pedal wrt origin}. The \textbf{Locus} command yielded a plot, but the \textbf{LocusEquation} command did not work. We computed a cartesian and a parametric presentation of the pedal curve in Section \ref{section pole at the origin}. 

The same cartesian equation can be obtained applying Proposition \ref{prop pedal-envelope}; we show here the process. We denote the pedal curve of the bicorn w.r.t. the origin by $\mathcal{C}_1$.
A  geometric construction of the circles is followed by applying the  \textbf{Envelope} command. The output is the green plot in Figure \ref{fig pedal wrt origin - envelope}, together with a polynomial equation $P(x,y)=0$ of degree 12, where
\scriptsize
\begin{equation}
\label{polyn eq pedal}
\begin{split}
P(x,y)= & 3 \; x^{12} + 3 \; y^{12} + 18 \; x^{2} \; y^{10} + 45 \; x^{4} \; y^{8} + 60 \; x^{6} \; y^{6} + 45 \; x^{8} \; y^{4} + 18 \; x^{10} \; y^{2} + 44 \; y^{11} \\
 &  + 220 \; x^{2} \; y^{9} + 440 \; x^{4} \; y^{7} + 440 \; x^{6} \; y^{5} + 220 \; x^{8} \; y^{3} + 44 \; x^{10} \; y + 15 \; x^{10} + 129 \; y^{10}  \\
 & + 531 \; x^{2} \; y^{8} + 834 \; x^{4} \; y^{6} + 606 \; x^{6} \; y^{4} + 189 \; x^{8} \; y^{2} - 240 \; y^{9} - 1016 \; x^{2} \; y^{7} - 1608 \; x^{4} \; y^{5} \\
 & - 1128 \; x^{6} \; y^{3} - 296 \; x^{8} \; y + 9 \; x^{8} + 64 \; y^{8} + 206 \; x^{2} \; y^{6} + 229 \; x^{4} \; y^{4} + 96 \; x^{6} \; y^{2} \\
 & + 240 \; x^{2} \; y^{5} + 492 \; x^{4} \; y^{3} + 252 \; x^{6} \; y - 27 \; x^{6} - 64 \; x^{2} \; y^{4} - 207 \; x^{4} \; y^{2}.
\end{split}
\end{equation}
\normalsize  
\begin{figure}[htb]
\begin{center}
\includegraphics[width=5cm]{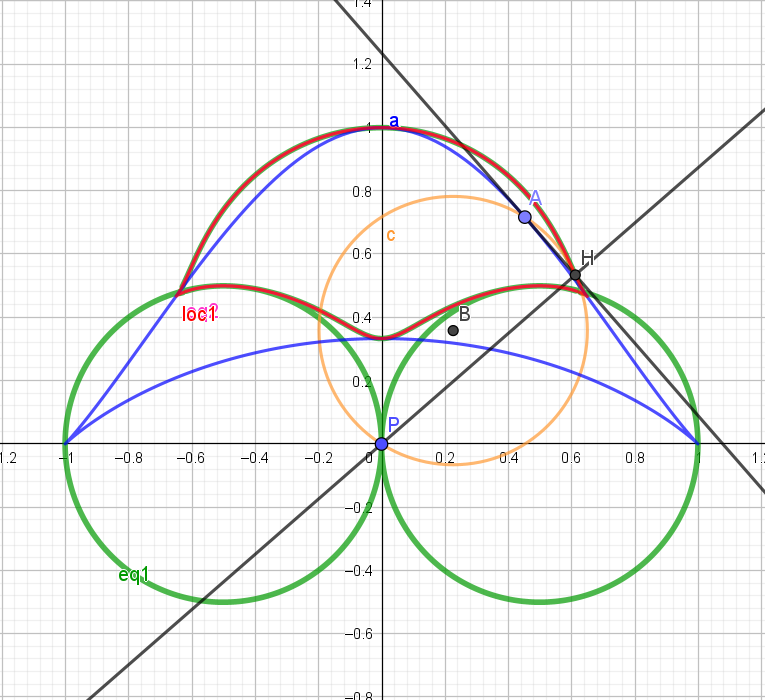}
\caption{The bicorn and its pedal curve as an envelope of circles}
\label{fig pedal wrt origin - envelope}
\end{center}
\end{figure}
This polynomial is reducible and has two quadratic factors $P_1(x,y)=x^{2} + y^{2} + x$ and  $P_2(x,y)=x^{2} + y^{2} - x$ (corresponding to the two circles appearing at the bottom in Figure \ref{fig pedal wrt origin - envelope}, and one irreducible factor of degree 8, namely the polynomial $G(x,y)$ in Equation \eqref{cartesian eq pedal}. Recall that the 2 irrelevant circles are a product of the computations in polynomial rings, and reflect a phenomenon in Zariski topology (such a discussion is beyond our scope here; it has been described for other situations in \cite{offsets of cassini,kiss curve}).
Note the ratio between the degrees: the bicorn is a quartic and its pedal curve w.r.t. the origin is an octic.

\subsection{Maths and Arts}

Numerous works are devoted to connections between math and visual arts. Among them, geometry is a central domain, as it enables to build activities suitable for any age. In \cite{saimon}, everyday artefacts from Tanzania are used as  a basis for learning geometry. The didactic interest is documented there. In another direction, closer to the present paper, Function Art described in \cite{arts} uses graphs of functions of a real variable are used to construct models of existing artefacts such as a kite or (with a small transition towards 3D) of bridges. An example of a ceramic floor displaying advanced curves (namely isoptics of Fermat curves) is given in \cite{fermat}.  

The basis curve in the present paper is a called a bicorn, or Napoleon's bicorn. It is easy to transform it into a 3D model of Napoleon's hat as a surface of revolution. The mathematical tools to use are the same which have been used for the bridges in \cite{arts}. 

The remark we wish to make is that numerous mathematical works around geometry, plane curves, surfaces, etc.   can be translated into arts. This is the case here. Figure \ref{fig lamp} shows a lamp on the wall in a public building form the area where one of the  authors lives. The shape fits the pedal curve of the bicorn described in Section \ref{pole on the y-axis}.

\begin{figure}[htb]
\begin{center}
\includegraphics[width=5cm]{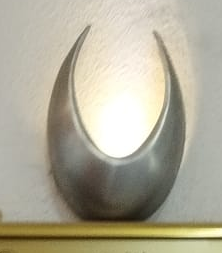}
\caption{A pedal curve of bicorn describes an artistic lamp}
\label{fig lamp}
\end{center}
\end{figure}

These last remarks explain why plane curves and related topics make this domain of study relevant to STEAM education.

\section{Conclusion}

The singular points depend only partially on the pole location, i.e. on the parameters $(r,s)$. They always lie on two fixed lines. By substituting the corresponding values of $u$ into the equation of the tangent line to the bicorn, we find the equations of these lines:
\begin{align}
    l_1 &: y = -\frac{15}{8}\sin\left(\arccos\frac{2}{3}\right)x + \frac{11}{8}
    =-\frac{5\sqrt{5}}{8}x+\frac{11}{8}
    \approx -1.40x + 1.38, \\
    l_2 &: y = \frac{15}{8}\sin\left(\arccos\frac{2}{3}\right)x + \frac{11}{8}
    =\frac{5\sqrt{5}}{8}x+\frac{11}{8}
    \approx 1.40x + 1.38.
\end{align}

In this paper, we studied the pedal curves of the bicorn with respect to different positions of the pole. In the first case, where the pole is located
at the origin, we obtained an explicit trigonometric parametrization of the pedal curve and, after implicitization by means of Gr\"obner bases, we showed
that the obtained curve is an octic curve.onstruction of a pedal curve. A characteristic if this process is that we needed to switch to an alternate presentation of a pedal curve, namely to view it as an envelope of circles. Duval \cite{duval,duval 2006} explains that mathematical objects cannot be grasped by hands, but are studied via different registers of representation. Regarding algebraic curves, the traditional registers are symbolic (cartesian equations, parametric presentations),numerical and graphical. Actually, a CAS or a DGS plots a curve when computing first a numerical representation, which is not displayed on the screen, only when explicitly requested to do it.

We considered then the case where the pole lies on the $y-$-axis. In this setting, the computation of the singular points showed that their geometric
locus is a subset of the union of the two fixed lines $l_1$ and $l_2$. This phenomenon persists in the general case: even when the pole is an
arbitrary point $P=(r,s)$, the parameter values which may yield singular points are independent of the pole location, and the corresponding singular
points remain constrained to the same two lines.

Finally, we examined possible self-intersections of the pedal curve. We proved that if two distinct parameter values define the same point of the pedal
curve, then this point must be the pole itself. Thus, apart from the possible appearance of the pole as a self-intersection point, the parametrization does
not produce additional self-intersections.

These results show that, although the pedal curves of the bicorn vary with the position of the pole, some of their geometric features are stable. In
particular, the two lines $l_1$ and $l_2$ play a central role in describing the singular behavior of the whole family of pedal curves.

The final section is devoted to study a pedal curve as an envelope. It is not common to see such a connection in textbooks. We should mention also that the notion of a geometric locus is frequently presented to high-school students, as it can be viewed in a totally geometric way, but envelopes of parametric families of curves are presented much later (if they are, see \cite{thom}) and requires tools from Calculus and Differential Geometry \cite{berger,bruce and giblin}.
Only when both topics have been acquired, the connection between them (and more, see \cite{lawrence}) can be explored.

\section*{Supplementary material}
\begin{enumerate}
\item Maple code for Section \ref{section pole at the origin}.
\item Maple code for the computation of the singular locus when the pole lies on the $y$-axis.
\item Maple code for the general case $P=(r,s)$.
\item Maple code for the verification of Lemma \ref{lemma equal tangent lines imply equal parameters}.
\end{enumerate}

\end{document}